\DeclareMathOperator*{\GUE}{GUE}
\DeclareMathOperator*{\GOE}{GOE}
\DeclareMathOperator*{\Airy}{Airy}
\newcommand{\PP}{\ensuremath{\mathbb{P}}}
\newcommand{\N}{\ensuremath{\mathbb{N}}}
\newcommand{\R}{\ensuremath{\mathbb{R}}}
\newcommand{\Z}{\ensuremath{\mathbb{Z}}}
\newcommand{\A}{\ensuremath{\mathcal{A}_2}}
\newcommand{\fr}{\ensuremath{\mathfrak{F}}}
\newcommand{\hl}{\ensuremath{\mathcal{HL}}}
\newcommand{\pp}{\ensuremath{\mathcal{PP}}}
\newtheorem{theorem}{Theorem}[section]
\newtheorem{lemma}[theorem]{Lemma}
\newtheorem{rem}[theorem]{Remark}
\newenvironment{remark}{\begin{rem}\normalfont}{\end{rem}}
\title{Universality of slow decorrelation in KPZ growth}
\author[I. Corwin]{Ivan Corwin}
\address{I. Corwin\\
  Courant Institute of Mathematical Sciences\\
  New York University\\
  251 Mercer Street\\
  New York, NY 10012, USA}
\email{corwin@cims.nyu.edu}
\author[P.L. Ferrari]{Patrik L. Ferrari}
\address{P.L. Ferrari\\
  Institute of Applied Mathematics\\
  University of Bonn\\
  Endenicher Allee 60\\
  53115 Bonn, Germany}
\email{ferrari@uni-bonn.de}
\author[S. P\'{e}ch\'{e}]{Sandrine P\'{e}ch\'{e}}
\address{S. P\'{e}ch\'{e}\\
  Institut Fourier\\
  100 Rue des maths\\
  38402 Saint Martin d'Heres, France}
\email{Sandrine.Peche@ujf-grenoble.fr}
\subjclass[82C22,60K35]{82C22, 60K35}
\keywords{Asymmetric Simple Exclusion Process, Interacting Particle Systems, Last Passage Percolation, Directed Polymers, KPZ}
\date{23. February 2011}
\begin{document}
\sloppy\maketitle

\begin{abstract}
There has been much success in describing the limiting spatial fluctuations of growth models in the Kardar-Parisi-Zhang (KPZ) universality class. A proper rescaling of time should introduce a non-trivial temporal dimension to these limiting fluctuations. 
In one-dimension, the KPZ class has the dynamical scaling exponent $z=3/2$, that means one should find a universal space-time limiting process under the scaling of time as $t\,T$, space like $t^{2/3} X$ and fluctuations like $t^{1/3}$  as $t\to\infty$.

In this paper we provide evidence for this belief. We prove that under certain hypotheses, growth models display temporal slow decorrelation. That is to say that in the scalings above, the limiting spatial process for times $t\, T$ and $t\, T+t^{\nu}$ are identical, for any $\nu<1$. The hypotheses are known to be satisfied for certain last passage percolation models, the polynuclear growth model, and the totally / partially asymmetric simple exclusion process. Using slow decorrelation we may extend known fluctuation limit results to space-time regions where correlation functions are unknown.

The approach we develop requires the minimal expected hypotheses for slow decorrelation to hold and provides a simple and intuitive proof which applied to a wide variety of models.
\end{abstract}

\section{Introduction}
Kardar, Parisi and Zhang (KPZ)~\cite{KPZ:1986d} proposed on physical grounds that a wide variety of irreversible stochastically growing interfaces should be governed 
by a single stochastic PDE (with two model dependent parameters $D,\lambda\neq 0$). Namely, let $x\mapsto h(x,t)\in\R$ be the height function at time $t$ and position $x\in\R^d$, then the KPZ equation is
\begin{equation*}
\frac{\partial h(x,t)}{\partial t} = D \Delta h(x,t) + \lambda|\nabla h(x,t)|^2 +\eta(x,t),
\end{equation*}
where $\eta(x,t)$ is a local noise term modeled by space-time white noise. Since then, it has been of significant interest to make mathematical sense of this SPDE (which is ill-posed due to the non-linearity) and to find the solutions for large growth time $t$.

Significant progress has been made towards understanding this equation in the one-dimensional $d=1$ case. Specifically, it is believed that the 
dynamical scaling exponent is $z=3/2$. This should mean that for any growth model (also polymer models) in the same universality class as the KPZ equation (i.e., the KPZ universality class), after centering $h$ by its asymptotic value
$\bar{h}(v):=\lim_{t\to\infty} \frac1t h(vt,t)$
and rescaling\footnote{Here we use $t$ as large parameter. In the literature also the choice $\epsilon^{-z}=t$ and $\epsilon\to 0$ is used.}
\begin{equation}\label{fixedpt}
h_{t}(X,T)= \frac{h(v T t+X (T t)^{2/3},T t)-T t\, \bar{h}(v+ X (T t)^{-1/3})}{(T t)^{1/3}},
\end{equation}
the limit of $h_{t}(X,T)$ should exist (as $t\to\infty$) and be independent of $v$. 
Moreover, the limit should, regardless of microscopic differences in the original models, converge to the same space-time 
process \footnote{As explained in the forthcoming paper~\cite{CQ:2011r}, this space-time process is expected to be a 
non-trivial renormalization fixed point for the whole KPZ universality class. See also \cite{Krug99,Spo95,Maj97} for previous 
discussion of space-time scalings in the physics literature.}.

Most of the rigorous work\footnote{Results for joint distributions at different times before taking $t$ to infinity have been derived, first in the problem of tagged particle in the TASEP~\cite{SI07}, and then in more general models~\cite{BF:2008l,BFS:2008l}. However, the different times are restricted to lie in an interval of width $O(t^{2/3})$.} done in studying the statistics associated with this fixed point have dealt with the spatial process (obtained as the asymptotic statistics of $h_{t}(X,T=1)$ as a process in $X$, as $t\to\infty$) and not on how the spatial process evolves with $T$. The exact form of these statistics depend only on the type of initial geometry of the growth process (e.g., the $\Airy_1$ process for non-random flat geometries and $\Airy_2$ process for wedge geometries; see the review~\cite{Fer10b}).

Computations of exact statistics require a level of solvability and thus have only been proved in the context of certain solvable discrete growth models or polymer models in the KPZ universality class. The partially/totally asymmetric simple exclusion process (P/TASEP), last passage percolation (LPP) with exponential or geometric weights, the corner growth model, and polynuclear growth (PNG) model constitute those models for which rigorous spatial fluctuation results have been proved. Recently, progress was made on analyzing the solution of the KPZ equation itself \cite{ACQ:2010p,SS:2010u, CQ:2010u,BG:1997s}, though this still relied on the approximation of the KPZ equation by a solvable discrete model.

The slow decorrelation phenomenon provides one of the strongest pieces of evidence that the above scaling is correct. Indeed, slow decorrelation means that $h_{t}(X,T) - h_{t}(X,T+t^{\nu-1})$ converges to zero in probability for any $\nu<1$. Fix $m$ times of the form $Tt + \alpha_i t^{\nu}$  (for $\alpha_i\in \R$ and $0<i\leq m$). Then, as long as $\nu<1$, the height function fluctuations, scaled by $t^{1/3}$ and considered in a spatial scale of $t^{2/3}$, will be asympotically (as $t\rightarrow \infty$) the same as those at time $Tt$.

Specifically, we introduce a generalized LPP model which encompasses several KPZ class models. Then we give sufficient conditions under which such LPP models display slow decorrelation. These conditions (the existence of a limit shape and one-point fluctuation result) are very elementary and 
hold for all the solvable models already mentioned, and are believed to hold for all KPZ class models. The proof that slow decorrelation 
follows from these two conditions is very simple -- it relies on the superadditivity property of LPP and on the simple observation that if 
$X_t\geq Y_t$ and both $X_t$ and $Y_t$ converge in law to the same random variable, then $X_t-Y_t$ converges in probability to zero (see Lemma~\ref{BAC_lemma}).

Previously, the slow decorrelation phenomenon was proved for the PNG model~\cite{PLF:2008s}. Therein the proof is based on very sharp estimates known in the literature only for the PNG. Apart from the PNG, the only other model for which slow decorrelation has been proved is TASEP under the assumption of stationary initial distribution~\cite{BFP:2009l}.

Besides being of conceptual interest, the slow decorrelation phenomenon is an important technical tool that allows one to, for instance: (a) easily translate limit process results between different related observables (e.g., total current, height function representation, particle positions in TASEP; see~\cite{BFP:2009l}), and more importantly, (b) prove limit theorems beyond the situations where the correlation functions are known \cite{SI07,BF:2008l,BFS:2008l} (see Section~\ref{corner_growth_model_sec}). A further application is in extending known process limit results to prove similar results for more general initial conditions / boundary conditions \cite{CFP:2009l}.

\subsection*{Outline}
In Section~\ref{gen_theory_sec} we introduce the general framework for LPP models in which we prove a set of criteria for slow decorrelation (Theorem~~\ref{growth_thm}). In the rest of the paper, we apply Theorem~\ref{growth_thm} to various models in the KPZ class, which can be related in some way with a LPP model: the corner growth model, point to point and point to line LPP models, TASEP, PASEP (which requires a slightly different argument since it cannot be directly mapped to a LPP problem) and PNG models. Finally we note extensions of the theorem to first passage percolation and directed polymers, provided that (as conjectured) the same criteria are satisfied.

\subsection*{Acknowledgments}
The authors wish to thank Jinho Baik for early discussions about this and related problems. 
I. Corwin wishes to thank the organizers of the ``Random Maps and Graphs on Surfaces'' conference at the Institut Henri Poincar\'{e}, 
as much of this work was done during that stay. Travel to that conference was provided through the PIRE grant OISE-07-30136 for which 
thanks goes to Charles Newman and G\'{e}rard Ben Arous for arranging for this funding. I. Corwin is funded by the NSF Graduate 
Research Fellowship. S. P\'{e}ch\'{e} would like to thank Herv\'{e} Guiol for useful discussions on TASEP and 
her work is partially supported by the Agence Nationale de la Recherche grant \mbox{ANR-08-BLAN-0311-01}.
The authors are very grateful to the anonymous referee for careful reading and a number of constructive remarks.

\section{A sufficient condition for slow decorrelation} \label{gen_theory_sec}
In this section we consider a general class of last passage percolation models (or equivalently growth models). Given the existence of a law of large numbers (LLN) and central limit theorem (CLT) for last passage time (or for the associated height function), we prove that such models display slow decorrelation along 
their specific, model dependent, ``characteristic'' directions.

We consider growth models in $\R^{d+1}$ for $d\geq 1$ which may be lattice based or driven by Poisson point processes. We define a directed LPP model to be an almost surely sigma-finite random non-negative measure $\mu$ on $\R^{d+1}$. For example we could take $\mu$ to be a collection of delta masses at every point of $\Z^{d+1}$ with weights given by random variables (which need not be independent or identically distributed).
Alternatively we could have a Poisson point process such as in the LPP realization of the PNG model. We will focus on a statistic we call the {\it directed half-line to point last passage time}. We choose to study this since, by specifying different distributions on the random measure $\mu$ one can recover statistics for a variety of KPZ class models. In order to define this passage time we introduce the half-line
\begin{equation*}
\hl=\{p:p_1=p_2=\cdots =p_{d+1} \leq 0\},
\end{equation*}
where $p_i$ is the $i$ coordinate of the point $p$.
\begin{figure}
\begin{center}
\psfrag{x}[c]{$x$}
\psfrag{t}[l]{$t$}
\psfrag{p}[l]{$p$}
\psfrag{pi}[l]{$\pi$}
\psfrag{HL}[c]{$\mathcal{HL}$}
\includegraphics[height=4.5cm]{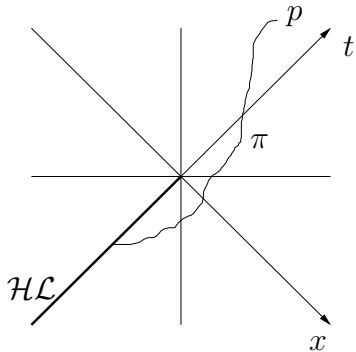}
\caption{The black line is half-line $\hl$ and the space and time axis are label. A directed path $\pi$ from $\hl$ to the point $p$ is shown.}
\label{slow_dec_space_time}
\end{center}
\end{figure}

It is convenient for us to define a second coordinate system which we call the space-time coordinate system as follows: Let $R$ be the rotation matrix which takes $\hl$ to $\{p:p_1\leq 0, p_2=\cdots=p_{d+1}=0\}$. Then the space-time coordinate system is $R^{-1}$ applied to the standard basis. The line $\{p:p_1=p_2=\cdots=p_{d+1}\}$ (which contains $\hl$) is the inverse image of $\{p:p_2=\cdots=p_{d+1}=0\}$ and we call it the $t$-axis (for ``time''), see Figure~\ref{slow_dec_space_time} for an illustration. The other space-time axes are labeled $x_1$ through $x_d$ (these are considered to be ``space'' axes). Call a curve $\pi$ in $\R^{d+1}$ a directed path if $\gamma=R\pi$ is a function of $t$ and is 1-Lipschitz. Two points are called ``time-like'' if they can be connected by such a path. Otherwise they are called ``space-like''.

To a directed path we assign a passage time
\begin{equation*}
T(\pi) = \mu(\pi)
\end{equation*}
which is the measure, under the random measure $\mu$ of the curve $\pi$.
Now we define the last passage time from the half-line $\hl$ to a point $p$ as
\begin{equation*}
L_{\hl}(p)=\sup_{\pi:\hl\to p} T(\pi),
\end{equation*}
where we understand the supremum as being over all directed paths starting from the half-line and going to $p$. One may also consider point to point last passage time between $p$ and $q$ which we write as $L_{\pp}(p,q)$. This is the special case of $\mu\equiv 0$ on $\{x: x-p \in\R^{d+1}\setminus \R_+^{d+1}\}$.

In Section~\ref{apps} we show how, by specifying the random measure $\mu$ differently, this model encompasses a wide variety of LPP models and related processes (such as TASEP and PNG). Just to illustrate though, take $d=1$ and let $\mu$ be composed of only delta masses at points $p$ in $\Z^2_+$ with mass $w_{p}$ exponentially distributed with rate 1. Then $L_{\hl}(p)$ is the last passage time for the usual LPP in a corner (or equivalently the corner growth model considered in Section~\ref{corner_growth_model_sec}).
We present our result in this more general framework to allow for non-lattice models such as the PNG model.

We can now state a result showing that \emph{slow decorrelation occurs} in any model which can be phrased in terms of this type of last passage percolation model \emph{provided both a LLN and a CLT hold}.

\begin{theorem}\label{growth_thm}
Fix a last passage model in dimension $d+1$ with $d\geq 1$ by specifying the distributions of the random variables which make up the environment.
Consider a point $p\in \R^{d+1}$ and a time-like direction $u\in \R_+^{d+1}$.
If there exist constants (depending on $p$, $u$, and the model): $\ell_{\hl}$ and $\ell_{\pp}$ non-negative;
$\gamma_{\hl},\gamma_{\pp}\in (0,1)$; $\nu\in(0,\gamma_{\hl}/\gamma_{\pp})$; distributions $D$, $D'$;
and scaling constants $c_{\hl},c_{\pp}$ such that
\begin{equation*}
\begin{aligned}
\chi_1(t)&:=\frac{L_{\hl}(tp)-t\ell_{\hl}}{c_{\hl}t^{\gamma_{\hl}}}\Longrightarrow D, \quad \textrm{as }t\textrm{ goes to infinity},\\
\chi_2(t)&:=\frac{L_{\hl}(tp+t^{\nu}u)-t\ell_{\hl}-t^\nu \ell_{\pp}}{c_{\hl}t^{\gamma_{\hl}}}\Longrightarrow D, \quad \textrm{as }t\textrm{ goes to infinity},\\
\chi_3(t)&:=\frac{L_{\pp}(tp,tp+t^{\nu}u)-t^\nu \ell_{\pp}}{c_{\pp}(t^{\nu})^{\gamma_{\pp}}}\Longrightarrow D', \quad \textrm{as }t\textrm{ goes to infinity},
\end{aligned}
\end{equation*}
then we have slow decorrelation of the half-line to point last passage time at $tp$, in the direction $u$ and with scaling exponent $\nu$,
which is to say that for all  $M>0$,
\begin{equation}\label{slow_dec}
\lim_{t\to \infty} \PP(|L_{\hl}(tp+t^{\nu}u)-L_{\hl}(tp)-t^\nu \ell_{\pp}|\geq M t^{\gamma_{\hl}})=0.
\end{equation}

\end{theorem}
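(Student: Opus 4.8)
The plan is to exploit the superadditivity of last passage times together with the elementary comparison principle recorded in Lemma~\ref{BAC_lemma}. First I would establish a superadditivity inequality by path concatenation. Since $u\in\R_+^{d+1}$ is a time-like direction, any directed path from $\hl$ to $tp$ may be concatenated with any directed path from $tp$ to $tp+t^\nu u$ to produce a single admissible directed path from $\hl$ to $tp+t^\nu u$, and the passage time of the concatenation is the sum of the passage times of the two pieces. Taking the supremum over each piece separately then gives
\begin{equation*}
L_{\hl}(tp+t^\nu u)\geq L_{\hl}(tp)+L_{\pp}(tp,tp+t^\nu u).
\end{equation*}

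Next I would subtract the centering constants $t\ell_{\hl}+t^\nu\ell_{\pp}$ from both sides and divide by $c_{\hl}t^{\gamma_{\hl}}$ so as to express the inequality through the three normalized quantities from the hypotheses, obtaining
\begin{equation*}
\chi_2(t)\geq \chi_1(t)+\frac{c_{\pp}}{c_{\hl}}\,t^{\nu\gamma_{\pp}-\gamma_{\hl}}\,\chi_3(t).
\end{equation*}
The crucial observation is that the deterministic scalar prefactor $\frac{c_{\pp}}{c_{\hl}}t^{\nu\gamma_{\pp}-\gamma_{\hl}}$ tends to $0$, precisely because the standing assumption $\nu<\gamma_{\hl}/\gamma_{\pp}$ forces the exponent $\nu\gamma_{\pp}-\gamma_{\hl}$ to be strictly negative. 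Since $\chi_3(t)\Longrightarrow D'$, the family $\{\chi_3(t)\}_t$ is tight and hence bounded in probability; multiplying a tight family by a deterministic null sequence produces a term that converges to $0$ in probability.

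I would then set $X_t:=\chi_2(t)$ and $Y_t:=\chi_1(t)+\frac{c_{\pp}}{c_{\hl}}t^{\nu\gamma_{\pp}-\gamma_{\hl}}\chi_3(t)$. The superadditivity inequality reads $X_t\geq Y_t$, while Slutsky's theorem, the hypothesis $\chi_1(t)\Longrightarrow D$, and the vanishing of the last term give $Y_t\Longrightarrow D$; by hypothesis $X_t\Longrightarrow D$ as well, so $X_t$ and $Y_t$ share a common distributional limit. Lemma~\ref{BAC_lemma} then yields $X_t-Y_t\to 0$ in probability, and absorbing the discarded null term shows $\chi_2(t)-\chi_1(t)\to 0$ in probability. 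Unwinding definitions, $\chi_2(t)-\chi_1(t)=\frac{L_{\hl}(tp+t^\nu u)-L_{\hl}(tp)-t^\nu\ell_{\pp}}{c_{\hl}t^{\gamma_{\hl}}}$, so for every $M>0$ the probability in~\eqref{slow_dec} equals $\PP(|\chi_2(t)-\chi_1(t)|\geq M/c_{\hl})$, which tends to $0$; this is exactly the asserted slow decorrelation.

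The only genuinely delicate point is the superadditivity step: in the general random-measure setting one must verify that the passage time is additive under concatenation, so that the single overlap point $tp$ contributes no mass, and that the concatenated curve is admissible, i.e.\ $1$-Lipschitz in the rotated time coordinate, which is where time-likeness of $u$ enters. Everything downstream is soft and model-independent, reducing to the general probabilistic fact isolated in Lemma~\ref{BAC_lemma} that an ordered pair of sequences with a common distributional limit must have difference vanishing in probability.
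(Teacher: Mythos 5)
Your proposal is correct and follows essentially the same route as the paper's own proof: superadditivity of the passage time, normalization into $\chi_1,\chi_2,\chi_3$, the observation that $\nu\gamma_{\pp}-\gamma_{\hl}<0$ kills the point-to-point term, and Lemma~\ref{BAC_lemma} applied to the ordered pair with common limit $D$. The only cosmetic difference is that you carry the constants $c_{\hl},c_{\pp}$ explicitly where the paper absorbs them into the distributions.
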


\begin{remark}\label{non_const_remark}
There are many generalizations of this result whose proofs are very similar. For instance the fixed (macroscopic) point $p$ and the fixed direction $u$ can, in fact, vary with $t$ as long as they converge as $t\to \infty$. One may also think of the random LPP measure $\mu$ (and the associated probability space $\Omega$) as depending on $t$. Thus for each $t$ the LPP environment is given by $\mu_t$ defined on the space $\Omega_t$. The probability $\PP$ will therefore also depend on $t$, however an inspection of the proof below shows that the whole theorem still holds with $\PP$ replaced by $\PP_t$.
\end{remark}

\begin{proof}[Proof of Theorem~\ref{growth_thm}]
Recall the super-additivity property:
\begin{equation*}
L_{\hl}(tp+t^{\nu}u)\geq L_{\hl}(tp)+L_{\pp}(tp,tp+t^{\nu}u),
\end{equation*}
which holds provided the last passage times are defined on the same probability space.
This follows from the fact that, by restricting the set of paths which contribute to $L_{\hl}(tp+t^{\nu}u)$ to only those which go through the point $tp$, one can only decrease the last passage time. The following lemma plays a central role in our proof.

\begin{lemma}[Lemma~4.1 of~\cite{BAC:2009c}]\label{BAC_lemma}
Consider two sequences of random variables $\{X_n\}$ and $\{\tilde{X}_n\}$ such that for each $n$,
$X_n$ and $\tilde{X}_n$ are defined on the same probability space $\Omega_n$. If $X_n\geq\tilde{X}_n$ and $X_n\Rightarrow D$
as well as $\tilde{X}_n\Rightarrow D$ then $X_n-\tilde{X}_n$ converges to zero in probability.
Conversely if $\tilde{X}_n\Rightarrow D$ and $X_n-\tilde{X}_n$ converges to zero in probability then $X_n\Rightarrow D$ as well.
\end{lemma}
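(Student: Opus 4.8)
The plan is to treat the two implications separately, the first being the substantive one and the second being a routine Slutsky-type argument. Throughout, let $F$ denote the cumulative distribution function of $D$, and recall that its set of continuity points is dense in $\R$. For the forward implication I would exploit the pathwise ordering $X_n\geq\tilde X_n$, which forces $X_n-\tilde X_n\geq 0$, so that it suffices to show $\PP(X_n-\tilde X_n>\epsilon)\to 0$ for every $\epsilon>0$.

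The first step is to record that the ordering squeezes the two distribution functions together. For any continuity point $a$ of $F$ the ordering gives $\{X_n\leq a\}\subseteq\{\tilde X_n\leq a\}$, whence
\[
\PP(\tilde X_n\leq a<X_n)=\PP(\tilde X_n\leq a)-\PP(X_n\leq a)\xrightarrow[n\to\infty]{}F(a)-F(a)=0,
\]
since both $X_n$ and $\tilde X_n$ converge in law to $D$. In words, the probability that the two variables straddle a fixed continuity point vanishes.

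The second step converts this into control of the difference through a finite grid. Fix $\epsilon>0$ and a large $R$ with $\pm R$ continuity points of $F$, and choose continuity points $-R=b_0<b_1<\cdots<b_m=R$ with spacing $b_{j+1}-b_j<\epsilon$. On the event $\{X_n-\tilde X_n>\epsilon\}$, either $\tilde X_n<-R$, or $X_n>R$, or both variables lie in $[-R,R]$; in the last case the interval $(\tilde X_n,X_n]$ has length exceeding $\epsilon$ and hence, the maximal gap of the grid being smaller than $\epsilon$, contains some grid point $b_j$, giving $\tilde X_n\leq b_j<X_n$. This yields the covering
\[
\{X_n-\tilde X_n>\epsilon\}\subseteq\{\tilde X_n<-R\}\cup\{X_n>R\}\cup\bigcup_{j=0}^{m}\{\tilde X_n\leq b_j<X_n\}.
\]
Letting $n\to\infty$, the finitely many straddling probabilities vanish by the first step, while the two tail terms converge to $F(-R)$ and $1-F(R)$; sending $R\to\infty$ drives these to zero, so $\limsup_n\PP(X_n-\tilde X_n>\epsilon)=0$, which is the asserted convergence in probability.

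For the converse I would write $X_n=\tilde X_n+(X_n-\tilde X_n)$ and invoke Slutsky's theorem, since $\tilde X_n\Rightarrow D$ and $X_n-\tilde X_n\to 0$ in probability (hence in law to the constant $0$); equivalently, for a continuity point $a$ and $\delta>0$ one bounds $\PP(X_n\leq a)\leq\PP(\tilde X_n\leq a+\delta)+\PP(|X_n-\tilde X_n|>\delta)$ with a matching lower bound at $a-\delta$, then sends $n\to\infty$ and $\delta\to 0$ through continuity points to squeeze $\PP(X_n\leq a)\to F(a)$. The main obstacle is entirely in the forward direction: a single-interval estimate fails because one cannot place $a$ simultaneously in both tails of $F$, and the resolution is precisely the finite partition above, which localizes the difference to a bounded window and then to a controlled number of vanishing straddling events.
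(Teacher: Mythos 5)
Your proof is correct in both directions. Note, however, that the paper does not actually prove this statement: it imports it wholesale as Lemma~4.1 of~\cite{BAC:2009c} and gives no argument of its own, so there is no in-paper proof to compare against --- your write-up supplies exactly the argument that the citation outsources. What you do is the standard (and essentially the cited reference's) route: the pathwise ordering $X_n\geq\tilde X_n$ converts the matching of the two limit laws into the vanishing of the straddling probabilities $\PP(\tilde X_n\leq a<X_n)=\PP(\tilde X_n\leq a)-\PP(X_n\leq a)\to 0$ at each continuity point $a$, and the finite grid of continuity points with mesh below $\epsilon$ on a window $[-R,R]$ (plus the two tail terms $F(-R)$ and $1-F(R)$, sent to zero along continuity points $R\to\infty$) localizes the event $\{X_n-\tilde X_n>\epsilon\}$ into finitely many such straddling events. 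Your diagnosis at the end is also the right one: the ordering is what makes a one-sided, single-$\epsilon$ statement suffice, but a single cutoff $a$ cannot control both tails, so the finite partition is the essential ingredient rather than a technical flourish. The converse via Slutsky (or the equivalent two-sided sandwich $\PP(\tilde X_n\leq a-\delta)-\PP(|X_n-\tilde X_n|>\delta)\leq\PP(X_n\leq a)\leq\PP(\tilde X_n\leq a+\delta)+\PP(|X_n-\tilde X_n|>\delta)$) is routine and correctly does not use the ordering hypothesis, which the lemma indeed drops for that implication.
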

From now on, we assume that the different last passage times $L_{\hl}(\cdot) $ and $L_{\pp}(\cdot)$ are realized on the same probability space. Also, by absorbing the constants $c_{\hl}$ and $c_{\pp}$ into the distributions, we may fix them to be equal to one. Using super-additivity we may write
\begin{equation}\label{growth_compensator}
L_{\hl}(tp+t^{\nu}u)=L_{\hl}(tp)+L_{\pp}(tp,tp+t^{\nu}u)+X_t,
\end{equation}
where $X_t\geq 0$ is a (compensator) random variable. Rewriting the above equation in terms of the random variables $\chi_1(t)$, $\chi_2(t)$ and $\chi_3(t)$ and dividing by $t^{\gamma_{\hl}}$ we are left with
\begin{equation*}
 \chi_2(t) = \chi_1(t) + \chi_3(t) t^{\nu\gamma_{\pp}-\gamma_{\hl}} +X_t t^{-\gamma_{\hl}}.
\end{equation*}
By assumption on $\nu$, $\nu\gamma_{\pp}-\gamma_{\hl}<0$ and
hence we know that the distribution of $\chi_2(t)$ and separately of $\chi_1(t) + \chi_3(t) t^{\nu\gamma_{\pp}-\gamma_{\hl}}$
converge to the same distribution $D$.
However, since $X_t t^{-\gamma_{\hl}}$ is always non-negative, we also know that
$\chi_2(t) \geq \chi_1(t) + \chi_3(t) t^{\nu\gamma_{\pp}-\gamma_{\hl}}$.
Therefore, by Lemma~\ref{BAC_lemma} their difference, $X_t t^{-\gamma_{\hl}}$, converges to zero in probability.
Thus $\chi_2(t)-\chi_1(t)$ converges to zero in probability. Since
\begin{equation*}
\chi_2(t)-\chi_1(t) = \frac{L_{\hl}(tp+t^{\nu}u)-L_{\hl}(tp)-t^\nu \ell_{\pp}}{t^{\gamma_{\hl}}}
\end{equation*}
the theorem immediately follows.
\end{proof}

\section{Slow decorrelation in KPZ growth models:\\ examples and applications}\label{apps}

The aim of this section is to make a non-exhaustive review of the possible fields of applications of Theorem~\ref{growth_thm}. We introduce a few standard models and explain, briefly, how they fit into the framework of half-line to point LPP and what the consequences of Theorem~\ref{growth_thm} are for these models.

\subsection{Corner growth model}\label{corner_growth_model_sec}
We choose to first develop in detail the implications of slow decorrelation for a simple KPZ growth process. This process is known as the {\it corner growth model} and is related to both LPP and TASEP .

Consider a set $A_t$ in $\R_+^2$ with initial condition $A_0=\R_+^2$ and evolving under the following dynamics: from each {\it outer corner} of $A_t$ a $[0,1)\times[0,1)$-box is fill at rate one (i.e., after exponentially distributed waiting time of mean $1$). See Figure~\ref{corner_fig} for an illustration of this growth rule where the model has been rotated by $\pi/4$.
\begin{figure}
\begin{center}
\psfrag{x}[cb]{$X$}
\psfrag{i}[c]{$x$}
\psfrag{j}[c]{$y$}
\psfrag{w11}[c]{$w_{1,1}$}
\psfrag{w12}[c]{$w_{1,2}$}
\psfrag{w13}[c]{$w_{1,3}$}
\psfrag{w21}[c]{$w_{2,1}$}
\psfrag{w22}[c]{$w_{2,2}$}
\psfrag{w31}[c]{$w_{3,1}$}
\includegraphics[height=5cm]{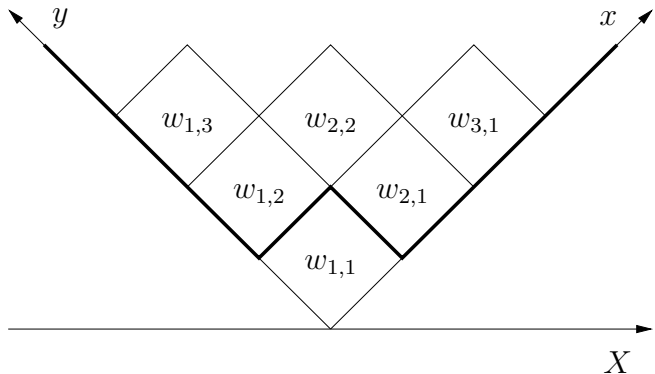}
\caption{Corner growth height function. The $w_{i,j}$ are the random, exponentially distributed times it takes to fill an outer corner. The black line is the height function at time $t=w_{1,1}$. There are two outer corners at that time.}
\label{corner_fig}
\end{center}
\end{figure}
One can record the evolution of the growing interface $\partial A_t$ in terms of the random variable
\begin{equation*}
L(x,y):=\inf\{ t\geq 0 | (x-\tfrac12,y-\tfrac12)\not\in A_t\}\textrm{ for }(x,y)\in \Z_+^2.
\end{equation*}

This random variable is well known to define a last passage time in a related directed percolation model, as we now recall.
Let $w_{i,j}$ be the waiting time for the outer corner $(i,j)$ to be filled, once it appears. A path $\pi$ from $(1,1)$ to $(x,y)$ is called directed if it moves either up or to the right along lattice edges from $(1,1)$ to $(x,y)$. To each such path $\pi$, one associates a passage time $T(\pi)=\sum_{(i,j)\in\pi} w_{i,j}$. Then
\begin{equation*}
L(x,y)=\max_{\pi:(1,1)\to (x,y)} T(\pi),
\end{equation*} i.e., $L$ is the last passage times from $(1,1)$ to $(x,y)$.

Alternatively one can keep track of $A_t$ in terms of a height function $h$ defined by the relationship
\begin{equation}\label{height_lpp}
\{h(x-y,t)\geq x+y\}= \{L(x,y)\leq t\}\quad (x,y)\in\Z_+^2,
\end{equation}
(together with linear interpolation for non-integer values of $x-y$). Note that for given $t$, $h(X,t)=|X|$ for $|X|$ large enough.
Thus the corner growth process is equivalent to the stochastic evolution of a height function $h(X,t)$ with $h(X,0)=|X|$ and growing according to the rule that local valleys ($\diagdown\;\!\!\diagup$) are replaced by local hills ($\diagup\;\!\!\diagdown$) at rate one. We will speak mostly about the height function, but when it comes to computing and proving theorems it is often easier to deal with the last passage picture.

\subsubsection{LLN and CLT}
Analogous to the LLN for sums of i.i.d.\ random variables, there exists an almost sure limit shape for the height function~\cite{HR:1981n,TS:1998h} of this growth model:
\begin{equation*}
\bar{h}(v):=\lim_{t\to \infty} \frac{h(vt,t)}{t}=
\begin{cases}
\frac{1}{2}(v^2+1), & \textrm{ for }v\in (-1,1),\\
|v|, & \textrm{ for }v\notin (-1,1).
\end{cases}
\end{equation*}

If we more generally consider the height function arising from a LPP model with an ergodic measure (e.g., iid random lattice weights), then super-additivity and the Kingman's ergodic theorem implies the existence of a (possibly infinity) limit growth evolution
\begin{equation*}
\tilde{h}(X,T) := \lim_{t\rightarrow \infty} \frac{h(Xt,Tt)}{t}.
\end{equation*}
Since LPP is a variation problem, the limiting profile is also given by the solution to a variational problem which, when translated into the limiting height function means that $\tilde{h}$ satisfies a Hamilton-Jacobi PDE $\partial_T \tilde{h} = f(\partial_X \tilde{h})$ for a model dependent flux function $f$. Such PDEs may have multiple solutions, and $\tilde{h}$ corresponds to the 
unique weak solutions subject to entropy conditions. Such PDEs can be solved via the method of characteristics~\cite{E:1998p}. Characteristics are lines of slope $f'$ along which initial data for $\tilde{h}$ is transported. In our present case if we set $\rho = \tfrac{1}{2}(1-\partial_X \tilde{h})$ then $\rho$ satisfies the Burgers equation $\partial_T \rho =\partial_X(\rho(1-\rho))$
and the characteristic lines are of constant velocity emanating out of the origin.

It is the fluctuations around this macroscopic profile which are believed to be universal. Johansson~\cite{KJ:2000s} proved that asymptotic one-point fluctuations are given by
\begin{equation}\label{eq8}
\lim_{t\to \infty} \PP\left(\frac{h(vt,t)-t \bar{h}(v)}{2^{-1/3} (1-v^2)^{2/3} t^{1/3}} \geq s\right) = F_{\GUE}(s),
\end{equation}
where $F_{\GUE}$ is the $\GUE$ Tracy-Widom distribution defined in \cite{TW:1994l}.
Unlike the traditional CLT the fluctuations here are in the order of $t^{1/3}$ and the limiting distribution is not Gaussian.

Likewise we may consider the fluctuations at multiple spatial locations by fixing
\begin{equation*}
\begin{aligned}
 X(\tau) &= vt +\tau(2(1-v^2))^{1/3} t^{2/3},\\
 H(\tau,s) &= \frac{1+v^2}{2}t + \tau\, v(2(1-v^2))^{1/3} t^{2/3} +(\tau^2-s)\frac{(1-v^2)^{2/3}}{2^{1/3}} t^{1/3}.
\end{aligned}
\end{equation*}
Here $H(\tau,0)=t\, \bar{h}(X(\tau)/t)+o(1)$ and $H(\tau,s)-H(\tau,0)$ measures the fluctuations with respect to the limit shape behavior. Then, in the large time limit, the joint-distributions of the fluctuations are governed by the so-called $\Airy_2$ process, denoted by $\A$. This process was introduced by Pr\"{a}hofer and Spohn~\cite{PS:2002s} in the context of the PNG model (see also \cite{KJ:2003d}). A complete definition of the Airy$_2$ process is recalled in \cite{CFP:2009l}. More precisely, it holds that
\begin{equation}\label{theta_zero_thm}
 \lim_{t\to \infty} \PP\left(\bigcap_{k=1}^{m} \{h(X(\tau_k),t)\geq H(\tau_k,s_k)\}\right)
= \PP\left(\bigcap_{k=1}^{m}\{\A(\tau_k)\leq s_k\}\right),
\end{equation}
where $m \geq 1$, $\tau_1<\tau_2<\cdots <\tau_m$ and $s_1,\ldots, s_m$ are real numbers. Of course, (\ref{eq8}) is the special case of (\ref{theta_zero_thm}) for $m=1$ and $\tau_1=0$.

\subsubsection{Slow decorrelation in the corner growth model}
We now consider how fluctuations in the height function are carried through time.
For instance, if the fluctuation of the height function above the origin is known at time $t$ (large) for how long can we expect to see this fluctuation persist (in the $t^{1/3}$ scale)? The answer is non-trivial and given by applying Theorem~\ref{growth_thm}: there exists a single direction in space-time along which the height function fluctuations are carried over time scales of order $t^1$, while for all other directions only at space-time distances of order $t^{2/3}$.
Indeed, given a fixed velocity \mbox{$v\in (-1,1)$}, any exponent $\nu<1$, and any real number $\theta$,
\begin{equation}\label{eq11}
\lim_{t\to \infty} \PP\left(\left|[h\big(v(t+\theta t^{\nu}),t+\theta t^{\nu}\big)-(t+\theta t^{\nu})\bar{h}(v)]-[h\big(vt,t\big) - t\bar{h}(v)]
\right| \geq M t^{1/3}\right) = 0,
\end{equation}
for any $M>0$. Thus, the height fluctuations at time $t$ at position $vt$ and at time and $t+\theta t^\nu$ at position $v(t+\theta t^\nu)$ differs only of $o(t^{1/3})$. These fixed velocity space-time lines are the characteristic of the Burgers equation above.

Thus, the right space-time scaling limit to consider is that given in equation (\ref{fixedpt}), with $h$ and $\bar{h}$ now taken to be the TASEP height function and asymptotic shape, rather than that of the KPZ equation. As noted below equation (\ref{fixedpt}), the value of the velocity $v$ should not affect the law of the limiting space-time process. As evidence for this, equation (\ref{theta_zero_thm}) shows that we encounter the $\mathrm{Airy}_2$ process as a scaling limit regardless of the value of $v\in (-1,1)$. This amounts to saying that the fixed $T$ marginals of the full space-time limit process of equation (\ref{fixedpt}) are independent of $v$.

\subsubsection{Implications of slow decorrelation}
Up to now only the ``spatial-like behavior'' of the space-time process (\ref{fixedpt}), i.e., the process in the variable $x$ for fixed $\beta$ (which one can set to $1$ w.l.o.g.) has been obtained, while the process in the variable $\beta$ remains to be unraveled.

A consequence of (\ref{eq11}) is that if we look at the fluctuations at two moments of time $t$ and $t'$ with $|t'-t|\sim t^\nu$ with $\nu<1$, it corresponds to taking $\beta=1+\mathcal{O}(t^{\nu-1})$ in the r.h.s.\ of (\ref{fixedpt}). Then in the $t\to\infty$ limit, the limit process is identical to the process for fixed $\beta=1$. So, if we determine the limit process for any space-time cut such that in the $t\to\infty$ limit, $\beta\to 1$, then, thanks to slow decorrelation, one can extend the result to any other space-time cut with the same property. In the following we refer to this property as {\it the process limit extends to general $1+0$ dimensional space-time directions}, meaning that we have $1$ dimension with spatial-like behavior and $0$ dimensions in the orthogonal direction.

As indicated in the Introduction, slow decorrelation also allows for instance (a) to translate the limit of different related observables and (b) to extend results on fluctuation statistics to space-time regions where correlation functions are unknown. We illustrate these features in the context of the corner growth model. 
For simplicity, we consider the case where $v=0$. Fix $m\geq 1$, $\nu\in[0,1)$, real numbers $\tau_1<\tau_2<\cdots <\tau_m$ and $s_1,\ldots, s_m$. Then set
\begin{equation}\label{eq13}
\begin{aligned}
x(\tau,\theta) &= \lfloor \tfrac{1}{4}(t+\theta t^{\nu}) +\tau 2^{-2/3} t^{2/3}\rfloor,\\
y(\tau,\theta) &= \lfloor \tfrac{1}{4}(t+\theta t^{\nu}) -\tau 2^{-2/3} t^{2/3}\rfloor,\\
\ell(\tau,\theta,s) &= (t+\theta t^{\nu}) + (s-\tau^2) 2^{2/3} t^{1/3}.
\end{aligned}
\end{equation}

(a) We first show that one can recover (\ref{theta_zero_thm}) from an analoguous statement in the corresponding LPP model using (\ref{height_lpp}) and slow decorrelation.
We start from a result in LPP. Consider the fixed $y=t/4$ slice of space-time. This is obtained by setting $\theta_k t^\nu=\tau_k 2^{4/3}t^{2/3}$ in (\ref{eq13}), for which
\begin{equation}\label{eq14}
x(\tau,\theta)=\tfrac14 t + \tau 2^{1/3} t^{2/3},\quad y(\tau,\theta)=\tfrac14 t,\quad \ell(\tau,\theta,s)=t+\tau 2^{4/3} t^{2/3}+(s-\tau^2)2^{2/3} t^{1/3}.
\end{equation}
Using the Schur process~\cite{BP:2008a}, it is proven~\cite{SI07,CFP:2009l} that for $\theta_k t^\nu=\tau_k 2^{4/3}t^{2/3}$, $k=1,\ldots,m $,
\begin{equation}\label{lpp_limit_thm_eqn}
\lim_{t\to \infty} \PP\left(\bigcap_{k=1}^{m} \{L(x(\tau_k,\theta_k),\tfrac14 t)\leq \ell(\tau_k,s_k)\}\right)
 =\PP\left(\bigcap_{k=1}^{m}\{\A(\tau_k)\leq s_k\}\right).
\end{equation}
To get the result on the height function (\ref{theta_zero_thm}) using (\ref{height_lpp}), one would need to make the choice:
$\theta_k t^{\nu}=-(s_k-\tau_k^2) 2^{2/3} t^{1/3}$. Then we have
\begin{equation}\label{eq16}
x(\tau,\theta)-y(\tau,\theta) = \tau 2^{1/3} t^{2/3},\quad x(\tau,\theta)+y(\tau,\theta)
= \tfrac12 t-(s-\tau^2) 2^{-1/3} t^{1/3},\quad \ell(\tau,\theta,s)=t.
\end{equation}
Thus, to obtain (\ref{theta_zero_thm}) (for $v=0$) from (\ref{lpp_limit_thm_eqn}) it is actually sufficient to project $(x,y)$ in (\ref{eq16}) on the line $y=t/4$ along the 
characteristic line passing through $(x,y)$, see Figure~\ref{FigureDPPext} for an illustration. One finds that this projection gives the scaling (\ref{eq14}) but with $\tau$ replaced by some
$\tilde \tau=\tau (1+o(1))\to \tau$ as $t\to\infty$. The reason is that the characteristics for $\tau\neq 0$ have slope slightly different from $0$. Finally, slow decorrelation (Theorem~\ref{growth_thm}, see also (\ref{eq11})) and the union bound imply then that
\begin{equation*}
 \textrm{l.h.s.\ of }(\ref{theta_zero_thm})\big|_{v=0} = \textrm{l.h.s.\ of }(\ref{lpp_limit_thm_eqn})
=\PP\left(\bigcap_{k=1}^{m}\{\A(\tau_k)\leq s_k\}\right).
\end{equation*}
\begin{figure}[t!]
\begin{center}
\psfrag{tnu}[cb]{$\mathcal{O}(T^\nu)$}
\psfrag{x}[cb]{$x$}
\psfrag{y}[c]{$y$}
\includegraphics[height=5cm]{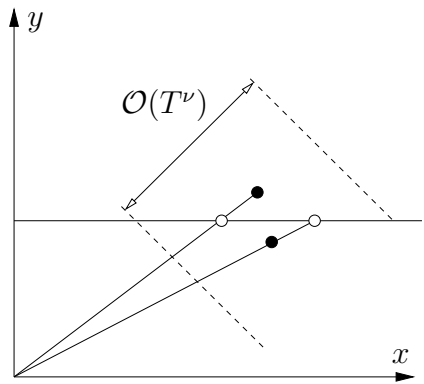}
\caption{Assume that the black dots are $\mathcal{O}(T^\nu)$ for some $\nu<1$ away from the line $y=t/4$.
Then, the fluctuations of the passage time at the locations of the black dots are, on the $T^{1/3}$ scale, the same as those of
their projection along the critical direction to the line $y=t/4$, the white dots.}
\label{FigureDPPext}
\end{center}
\end{figure}

(b) The results for (\ref{theta_zero_thm}) and (\ref{lpp_limit_thm_eqn}) are derived by using the knowledge of (determinantal) correlation functions. The techniques used for these models are however restricted to \emph{space-like paths} (in the best case, see~\cite{BF:2008l}), i.e., for sequences of points $(x_k,y_k)_k$ such that $x_{k+1}-x_k\geq 0$ and $y_{k+1}-y_k\leq 0$ (which can not be connected by directed paths). Now, choose in (\ref{eq13}) $\theta_k t^\nu=\tilde\theta_k t^\nu-(s_k-\tau_k^2)2^{2/3} t^{1/3}$ for some real $\tilde \theta_k$. Then, it means that we look at the height fluctuations at times $\ell(\tau_k,\theta_k,s_k)=t+\tilde \theta_k t^\nu$, with
\begin{equation}\label{eq18}
x(\tau,\theta)-y(\tau,\theta) = \tau 2^{1/3} t^{2/3},\quad x(\tau,\theta)+y(\tau,\theta) = \tfrac12 (t+\tilde\theta t^\nu)-(s-\tau^2) 2^{-1/3} t^{1/3}.
\end{equation}
Thus, one can cover much more than only the space-like regions. As before, the projection along the characteristic line of $(x,y)$ on $\tilde \theta_k=0$ leads to (\ref{eq18}) with $\tilde\theta=0$ and with a slightly modified $\tau$ (i.e., $\tau\to \tau(1+o(1))$).
Then,  using slow decorrelation, one can extend (\ref{theta_zero_thm}) to the following result: fix $m\geq 1$, $\nu\in [0,1)$, real numbers $\tau_1<\tau_2<\cdots <\tau_m$ and $s_1,\ldots, s_m$. Set
\begin{equation*}
X(\tau) = \tau 2^{1/3}t^{2/3}, \quad H(\tau,\theta,s) = \frac{1}{2}(t+\theta t^{\nu})+(\tau^2-s)2^{-1/3}t^{1/3}.
\end{equation*}
Then we have
\begin{equation}\label{gen_case}
 \lim_{t\to \infty} \PP\left(\bigcap_{k=1}^{m} \{h(X(\tau_k),t+\theta_k t^{\nu})\geq H(\tau_k,\theta_k,s_k)\}\right)
= \PP\left(\bigcap_{k=1}^{m}\{\A(\tau_k)\leq s_k\}\right).
\end{equation}
This type of computations can be readily adapted to the other KPZ models considered in the sequel.

\subsection{Point to point LPP }
Consider the following random measure $\mu$ on $\R^{d+1}$:
\begin{equation*}
\mu=\sum_{p\in \Z_+^{d+1}} w_{p}\delta_{p}
\end{equation*}
where $\Z_+=\{0,1,\ldots\}$ and $w_p$ are non-negative random variables. One may consider directed paths to be restricted to follow the lattice edges. This is just standard (point-to-point) last passage percolation (as considered for instance in~\cite{KJ:2000s}). We will restrict ourselves to the case where $d=1$, i.e., LPP in the 2-dimensional corner. Here we write $w_{i,j}$ for weights.

The conditions for our slow decorrelation theorem to hold amount to the existence of a LLN and CLT. Presently, for point to point LPP, this is only rigorously know for the
two solvable classes of weight distributions -- exponential and geometric. For general weight distributions, the existence of a LLN follows from superadditivity (via the Kingman subadditive ergodic theorem), though the exact value of the LLN is not known beyond the solvable cases. None the less, universality is expected at the level of the CLT for a very wide class of underlying weight distributions. That is to say that, after centering by the LLN, and under $t^{1/3}$ scaling, the fluctuations of LPP should always be given by the $F_{\GUE}$ distribution. 
In the results we now state we will restrict attention to exponential weights, as geometric weights lead to analogous results.

Define LPP with {\it two-sided boundary conditions} as the model with independent exponential weights such that, for positive parameters $\pi,\eta>0$,
\begin{equation}\label{eq31}
w_{i,j} = \begin{cases}
       \text{exponential of rate } \pi, &\text{if } i>0,j=0;\\
           \text{exponential of rate } \eta, &\text{if } i=0,j>0;\\
       \text{exponential of rate } 1, &\text{if } i>0,j>0;\\
           \text{zero},& \text{if } i=0,j=0.\\
          \end{cases}
\end{equation}
Recall that an exponential of rate $\lambda$ has mean $1/\lambda$. 
This class of models was introduced in~\cite{PS:2002c} (and for geometric weights in~\cite{BR:2000l}) and 
includes the one considered in~\cite{KJ:2000s}.

A full description of the one-point fluctuation limit theorems for $L_{\hl}(tp)$ was conjectured in~\cite{PS:2002c} (see Conjecture 7.1) and a complete proof was given in~\cite{BAC:2009c}. These limit theorems show that the hypotheses for slow decorrelation are satisfied and hence Theorem~\ref{growth_thm} applies. We present an adaptation of Theorem~1.3 of~\cite{BAC:2009c} stated in such a way that Theorem~\ref{growth_thm} is immediately applicable. As such, we also state the outcome of applying Theorem~\ref{growth_thm} (see Figure~\ref{slow_dec_2_sided_lpp_characteristics} for an illustration). The characteristic direction $u$ as well as the exponents and limiting distributions for the fluctuations
depend on the location of the point $p$ as well as the values of $\pi$ and $\eta$. Due to the radial scaling of $p$ it suffices to consider $p$ of the form $p=(1,\kappa^2)$ for $\kappa^2\in (0,\infty)$.

In the following theorem, $\gamma_{\pp}=1/3$, $c_{\hl}$ is a constant depending on the direction $p$, $c_{\pp}$ is a constant depending on the direction $u$, and $D'$ is $F_{\GUE}$. We also refer the reader to ~\cite{BAC:2009c} for the definitions of the distribution functions $F_{\GUE}$, $F_{\GOE}^2$ and $F_{0}$ which arise here.

\begin{theorem}\label{bac_thm}
Define $\kappa_{\eta}=\frac{\eta}{1-\eta}$, $\kappa_{\pi}=\frac{1-\pi}{\pi}$ and $\kappa_{sh}=\sqrt{\frac{\eta(1-\pi)}{\pi(1-\eta)}}$.
\begin{enumerate}
\item If $\kappa_\eta\geq \kappa \geq\kappa_\pi$ (which implies that $\pi+\eta\geq 1$) then
$\ell_{\hl}= (1+\kappa)^2$, $\gamma_{\hl} = 1/3$, and $D$ is either $F_{\GUE}$ (in the case of strict inequality) or $F_{\GOE}^2$ (in the case of either, but not both, equalities) or $F_{0}$ (in the case where all three terms in the inequality are equal). Then, there is slow decorrelation in the direction $u=p$ for all $\nu\in (0,1)$.

\item If $\pi+\eta\geq 1$ and $\kappa>\kappa_\eta$ then $\ell_{\hl}=\frac{\kappa^2}{\eta}+\frac{1}{1-\eta}$, $\gamma_{\hl}=1/2$, and $D=\mathcal{N}_{0,1}$ is the standard Gaussian distribution. Then there is slow decorrelation in the direction $u=(1, \kappa_{\eta}^2)$ for all $\nu\in (0,1)$.

\item If $\pi+\eta\geq 1$ and $\kappa<\kappa_\pi$ then $\ell_{\hl}=\frac{1}{\pi}+\frac{\kappa^2}{1-\pi}$,
$\gamma_{\hl}=1/2$, and $D=\mathcal{N}_{0,1}$. Then there is slow decorrelation in the direction $u=(1, \kappa_{\pi}^2)$ for all $\nu\in (0,1)$.

\item If $\pi+\eta<1$ and $\kappa>\kappa_{sh}$ then $\ell_{\hl}=\frac{\kappa^2}{\eta}+\frac{1}{1-\eta}$,
$\gamma_{\hl}=1/2$, and $D=\mathcal{N}_{0,1}$. Then there is slow decorrelation in the direction $u= (1,\kappa_{\eta}^2)$ for all $\nu\in (0,1)$.

\item If $\pi+\eta<1$ and $\kappa<\kappa_{sh}$  then
$\ell_{\hl}=\frac{1}{\pi}+\frac{\kappa^2}{1-\pi}$, $\gamma_{\hl}=1/2$, and $D=\mathcal{N}_{0,1}$. Then there is slow decorrelation in the direction $u= (1,\kappa_{\pi}^2)$ for all $\nu\in (0,1)$.

\item If $\pi+\eta<1$ and $\kappa=\kappa_{sh}$ then $\ell_{hl} = \frac{1}{\pi(1-\pi)}= \frac{1}{\eta(1-\eta)}$, $\gamma_{hl}=1/2$, and $D$ is distributed as the maximum of two independent Gaussian distributions. \emph{Then there is no slow decorrelation.}
\end{enumerate}
\end{theorem}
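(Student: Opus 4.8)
The plan is, for each of cases (1)--(5), to verify the three distributional hypotheses of Theorem~\ref{growth_thm} and then invoke that theorem directly; case (6) is handled separately by showing that those hypotheses cannot be met and that (\ref{slow_dec}) genuinely fails. All of the one-point inputs are supplied by Theorem~1.3 of~\cite{BAC:2009c}, which is precisely the fluctuation statement for $L_{\hl}$ at a radially scaled point $tp$ and which hands us the constants $\ell_{\hl}$, the exponent $\gamma_{\hl}$, and the limit law $D$ recorded in each case. Thus $\chi_1(t)\Rightarrow D$ is immediate. Since $\gamma_{\pp}=1/3$ throughout, the third hypothesis $\chi_3(t)\Rightarrow F_{\GUE}=D'$ is the point-to-point statement of Johansson~\cite{KJ:2000s} applied to the two points $tp$ and $tp+t^\nu u$, which lie in the bulk away from the boundary axes.

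The delicate hypothesis is $\chi_2(t)\Rightarrow D$, and its validity is governed entirely by the correct choice of the direction $u$ and the constant $\ell_{\pp}$. The point is that $\ell_{\pp}$ must play two roles at once: it is the centering in $\chi_3$, hence the bulk point-to-point LLN constant in direction $u$, and it is the coefficient of $t^\nu$ in the centering of $\chi_2$, hence the directional derivative of $q\mapsto\ell_{\hl}$ along $u$. These two quantities agree exactly when $u$ is the direction in which the half-line maximizer to $tp$ asymptotically continues, i.e.\ the characteristic: radial, $u=p$, in the bulk case (1), and tilted toward the dominant boundary, $u=(1,\kappa_\eta^2)$ in cases (2),(4) and $u=(1,\kappa_\pi^2)$ in cases (3),(5). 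One checks this consistency by matching the gradient of the bulk LLN with that of the linear half-line shape $\ell_{\hl}$; the tangency locus is precisely the stated direction. With this choice, displacing $tp$ by $t^\nu u$ simply extends the maximizer along its own final direction, so the bulk segment from $tp$ to $tp+t^\nu u$ is asymptotically not wasteful and $\ell_{\pp}$ is consistent.

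Granting this, $\chi_2(t)\Rightarrow D$ follows by applying Theorem~1.3 of~\cite{BAC:2009c} at the displaced point $tp+t^\nu u$, whose macroscopic direction tends to $p$ since $\nu<1$; it therefore falls in the same case, and the displacement contributes exactly $t^\nu\ell_{\pp}$ to the centering. (That the macroscopic point may vary with $t$ in this way is the content of Remark~\ref{non_const_remark}.) Feeding $\chi_1,\chi_2,\chi_3$ into Theorem~\ref{growth_thm} then yields slow decorrelation in the direction $u$ for all $\nu\in(0,\gamma_{\hl}/\gamma_{\pp})$, which contains $(0,1)$ in every case ($\gamma_{\hl}/\gamma_{\pp}=1$ in case (1) and $3/2$ in cases (2)--(5)). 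The routine obstacle here is the bookkeeping for the Gaussian cases, where $\gamma_{\hl}=1/2>\gamma_{\pp}=1/3$: one must confirm both that the displacement keeps the point inside the single-boundary-dominated window around $p$ and that the Gaussian $D$ is transported unchanged, rather than crossing over to an $F_{\GUE}$ or $F_{\GOE}^2$ regime.

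For case (6) the point $p=(1,\kappa_{sh}^2)$ sits on the shock where \emph{two} maximizing strategies---one arm exiting along the $\eta$-axis, one along the $\pi$-axis---have equal limit shape but independent Gaussian fluctuations, so $D$ is their maximum. These two arms have distinct continuation directions, $(1,\kappa_\eta^2)$ and $(1,\kappa_\pi^2)$, and no single $u$ is the characteristic of both; consequently there is no choice of $(u,\ell_{\pp})$ rendering $\ell_{\pp}$ consistent in the sense above, and the sufficient condition of Theorem~\ref{growth_thm} cannot be invoked. To actually prove that (\ref{slow_dec}) fails I would analyze the two arms jointly: for any admissible $u$, the arm whose continuation direction differs from $u$ reaches $tp+t^\nu u$ along a path not passing through $tp$, leaving a strictly positive compensator, so its Gaussian contribution is resampled while the other is preserved (for a radial $u$ both are resampled). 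Hence $\chi_2(t)-\chi_1(t)$ retains a non-degenerate limit---a difference of two distinct maxima---and does not tend to zero in probability. I expect this to be the genuinely hard step: unlike the single-arm cases it requires joint control of the two competing arms together with a quantitative estimate of the transversal decorrelation length of the boundary Gaussian relative to the displacement scale $t^\nu$, in order to certify that the maximum structure is broken for the full range of $\nu$ at issue.
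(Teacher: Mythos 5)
Your proposal is correct and follows essentially the same route as the paper: the paper offers no proof beyond citing Theorem~1.3 of~\cite{BAC:2009c} for the three one-point hypotheses and invoking Theorem~\ref{growth_thm}, and your verification that the characteristic direction is singled out by the consistency requirement $\ell_{\pp}=\nabla\ell_{\hl}(p)\cdot u$ (directional derivative of the half-line shape equals the point-to-point constant) is exactly the unwritten content of that step, with the correct bookkeeping $\nu\in(0,\gamma_{\hl}/\gamma_{\pp})\supseteq(0,1)$. The negative assertion in case (6), which you rightly flag as the only genuinely delicate point, is likewise stated without proof in the paper, so your admittedly incomplete sketch there is not a gap relative to the source.
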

This last passage percolation model is related to a TASEP model with two-sided initial conditions
(which we discuss in Subsection~\ref{ASEP}). As explained before the characteristics are those for the Burgers equation. The first three cases above correspond with a situation that is known of as a {\it rarefaction fan}, while the last three correspond with the {\it shockwave}. The above result is illustrated in Figure~\ref{slow_dec_2_sided_lpp_characteristics}. The left case displays the rarefaction fan (the fanning of the characteristic lines from the origin) and the right case displays a shockwave (the joining together of characteristic lines coming from different directions).
\begin{figure}
\begin{center}
\psfrag{(a)}[c]{(a)}
\psfrag{(b)}[c]{(b)}
\psfrag{1}[c]{Case 1}
\psfrag{2}[c]{Case 2}
\psfrag{3}[c]{Case 3}
\psfrag{4}[c]{Case 4}
\psfrag{5}[c]{Case 5}
\psfrag{k1}[c]{$\kappa^2=\kappa^2_\eta$}
\psfrag{k2}[l]{$\kappa^2=\kappa^2_\pi$}
\psfrag{ks}[c]{$\kappa^2=\kappa^2_s$}
\includegraphics[height=6cm]{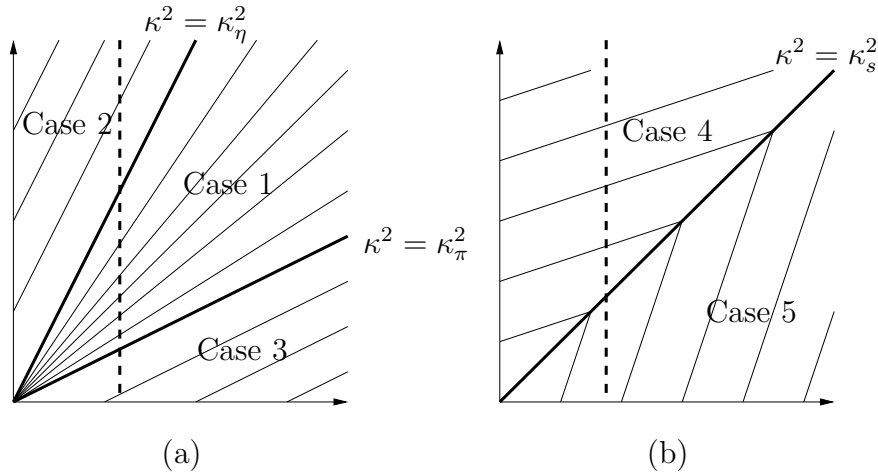}
\caption{The different cases of fluctuation limit theorems and accompanying directions of slow decorrelation:
(a) $\eta+\pi>1$ (actually shown $\eta=\pi=2/3$); (b) $\eta+\pi<1$ (actually shown $\eta=\pi=1/3$).
As $\kappa^2$ (the height along the dashed line) varies, the case of fluctuation theorem changes, as does the
direction of slow decorrelation (given by the direction of the thin lines).}
\label{slow_dec_2_sided_lpp_characteristics}
\end{center}
\end{figure}

In addition to one-point fluctuation limits, the above two-sided boundary condition LPP model has a fully classified limit process description. The description was given in~\cite{BFP:2009l} for $\pi+\eta=1$ (known as the stationary case) and in~\cite{CFP:2009l} for all other (non-equilibrium) boundary conditions. These process limits are obtained using determinantal expressions for the joint distribution of the last passage times for points along fixed directions. Thus, initially, one only gets process limits along fixed lines. As explained in Section~\ref{corner_growth_model_sec} and in~\cite{CFP:2009l} slow decorrelation, however, implies that the appropriately rescaled fluctuations at the points which are off of this line (to order $t^{\nu}$ for $\nu<1$) have the same joint distribution as their projection along characteristics to the line (see Figure 1 of~\cite{BFP:2009l} for an illustration of this).

A completely analogous situation arises in the case of geometric, rather than exponential weights (this model is often called discrete PNG). Such a model is described in~\cite{BR:2000l} and the one-point limiting fluctuation statistics are identified. The spatial process limit is characterized in~\cite{IS:2004f}. These results are only proved in a fixed space-time direction, though applying  Theorem~\ref{growth_thm} we can extend this process limit away from this fixed direction just as with the exponential weights.

A slightly different model with boundary conditions was introduced in~\cite{BBP:2005p} and involves {\it thick one-sided boundary conditions}. Fix a $k\in\N$, parameters $\pi_1,\ldots, \pi_k$, and set $\pi_i=1$ for $i>k$. Just as above, we define independent random weights on $\Z_+^2$, this time with $w_{i,j}$ exponential random variables of rate $\pi_i$ (mean $1/\pi_i$). Section 6 of~\cite{BBP:2005p} explains how results they obtain for perturbed Wishart ensembles translate into a complete fluctuation limit theorem description for this model. Just as in the two-sided boundary case, those limit theorems show that the hypotheses of Theorem~\ref{growth_thm} are satisfied and therefore there is slow decorrelation. The exponent $\gamma_{\hl}$ depends on the point $p$ and the strength of the boundary parameters $\pi_i$ and can either be $1/3$, with random matrix type fluctuations, or $1/2$ with Gaussian type (more generally $\ell \times \ell$ $\GUE$
for some $1\leq \ell \leq k$) fluctuations (see~\cite{BBP:2005p}~Theorem~1.1). The exponent $\gamma_{\pp}=1/3$ and the limiting distribution $D'$ is $F_{\GUE}$. The direction of the slow decorrelation depends on the parameters and the point (we do not write out a general parametrization of this direction as there are many cases to consider depending on the $\pi_i$). The fluctuation process limit theorem has not been proved for this model, though the method of~\cite{CFP:2009l} would certainly yield such a theorem. Also, analogous results for the geometric case have not been proved either but should be deducible from the Schur process~\cite{BP:2008a}.

\subsection{TASEP and PASEP}\label{ASEP}

\subsubsection{Totally asymmetric simple exclusion process (TASEP)}
TASEP is a Markov process in continuous time with state space $\{0,1\}^{\Z}$ (think of 1s as particles and 0s as holes). Particles jump to their right neighboring site at rate $1$, provided the site is empty. The waiting time for a jump is exponentially distributed with mean $1$ (discrete-time versions have geometrically distributed waiting times). See~\cite{TL:1999s,TL:2005i} for a rigorous construction of this process.

TASEP with different initial conditions can be readily translated into LPP with specific measures $\mu$ and hence Theorem~\ref{growth_thm} may be applied. Slow decorrelation can thus be used to show that fluctuation limit processes can be extended from fixed space-time directions to general $1+0$ dimensional space-time directions.

An observable of interest for TASEP is the integrated current of particles $I(x,t)$ defined as the number of particles which jumped from $x$ to $x+1$ during the time interval $[0,t]$. Also of interest is the height function $h(x,t)$
\begin{equation}\label{height_function_def}
 h(x,t)=\begin{cases}
 2I(0,t)+\sum_{i=1}^{x}(1-2\eta_{t}(i)), &x\geq 1,\\
 2I(0,t), & x=0,\\
 2I(0,t)-\sum_{i=x+1}^{0}(1-2\eta_{t}(i)), &x\leq -1,
 \end{cases}
\end{equation}
where $\eta_t(i)=1$ (resp.\ $\eta_t(i)=0$) is site $i$ is occupied (resp.\ empty) at time $t$. There is a simple relationship between the current and the height function given by
\begin{equation}\label{height_current}
I(x,t)=\tfrac12 (h(x,t)-x).
\end{equation}

A well-studied initial condition is {\it step initial condition}: At time $t=0$, $\{\ldots, -2,-1,0\}$ is filled by particles and $\{1,2,\ldots\}$ is empty, i.e., $h(x,t=0)=|x|$. There is a simple relation with the corner growth model studied in Subsection~\ref{corner_growth_model_sec}. The weight $w_{i,j}$ is the waiting time (counted from the instant when the particle can jump) for the particle which started in position $-j+1$ to move from position $i-j$ to $i-j+1$. Thus, the TASEP height function records the boundary of the region of points $p$ for which $L_{\hl}(p)\leq t$. Therefore, as in Subsection~\ref{corner_growth_model_sec}, one can apply Theorem~\ref{growth_thm} leading to slow decorrelation (in the sense of equation (\ref{eq11})) for the
fluctuations of the TASEP height function along space-time lines corresponding to the characteristics of
Burgers equation.

\begin{figure}
\begin{center}
\psfrag{(a)}[c]{(a)}
\psfrag{(b)}[c]{(b)}
\psfrag{x}[c]{$x$}
\psfrag{t}[c]{$t$}
\includegraphics[height=4.5cm]{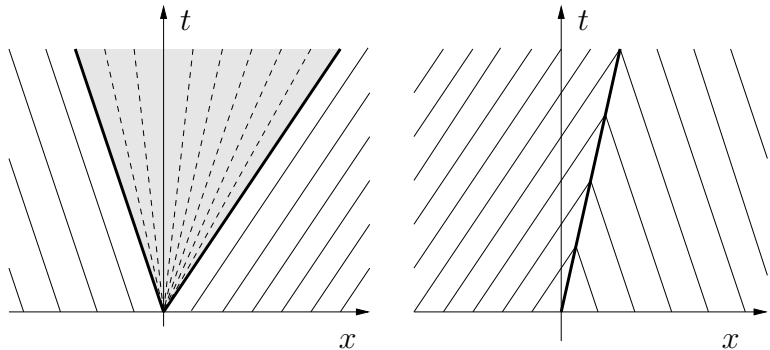}
\caption{Rarefaction wave on the left and shockwave on the right.}
\label{slow_dec_2_sided_bc}
\end{center}
\end{figure}

An important generalization of the step initial condition are the {\it two-sided Bernoulli initial conditions} which are defined for all pairs $\rho_-,\rho_+\in [0,1]$ as the random initial conditions in which particles initially occupy sites to the left of the origin with probability $\rho_-$ (independently of other sites) and likewise to the right of the origin with probability $\rho_+$. It was proven in~\cite{PS:2002s} that two-sided TASEP can be mapped\footnote{The mapping requires a geometric number of zero weights along the boundary which do not affect asymptotics.} to the LPP with two-sided boundary conditions model (\ref{eq31})  with $\pi=1-\rho_+$ and $\eta=\rho_-$. Using this connection and slow decorrelation, one can show that all the results stated for the LPP model (\ref{eq31}) can be translated into their counterpart for two-sided TASEP.
This is made in detail in~\cite{CFP:2009l} (which uses some arguments of this paper), where we prove a
complete fluctuation process limit for $\rho_-\neq \rho_+$ which complements the recent result of~\cite{BFP:2009l} for $\rho_-=\rho_+$.

The characteristic line leaving position $x$ has slope $1-2\rho(x)$. On top of this, the entropy condition ensures that if $\rho_->\rho_+$, there will be a rarefaction fan from the origin which will fill the void between lines of slope $1-2\rho_-$ and $1-2\rho_+$. The Rankine-Hugoniot condition applies to the case where $\rho_-<\rho_+$ and introduces shockwaves with specified velocities when characteristic lines would cross. These two types of characteristics are illustrated side-by-side in Figure~\ref{slow_dec_2_sided_bc}.

Another variation is TASEP with {\it slow particles or slow start-up times}, which is considered in~\cite{JB:2006p}. It may likewise be connected to the LPP with thick one-sided boundary conditions model which we previously introduced. As a result we may similarly conclude slow decorrelation.

Not all initial conditions correspond to LPP with weights restricted to $\Z_+^2$. For example, TASEP
with {\it flat (or periodic) initial conditions} corresponds to the case where only the sites of $k\Z$, for $k\geq 2$ are initially occupied. For simplicity, we focus on the case $k=2$. Then, the height function at time $t=0$ is a saw-tooth, see Figure~\ref{slow_dec_flat_tasep}(a) (though asymptotically flat, from which the name). Rotating by $\pi/2$, it is the growth interface for half-line to point LPP where the measure $\mu$ is supported on points in $(i,j)\in\Z^2$ such that $i+j\geq 0$ and given by delta masses with independent exponential weights of rate $1$. Fluctuation theorems and limit process have been proved for several periodic initial conditions~\cite{BFPS:2007f,BFP:2007f} (in~\cite{BFP:2007f} was in discrete time, i.e., with geometric weights).

Similarly TASEP with {\it half-flat initial conditions} is defined by letting particles start at $2\Z_-=\{\cdots, -4,-2,0\}$. The corresponding last passage percolation model has non-zero weights for points $(i,j)$ such that $i+j\geq 0$ and $j\geq 0$. The limit process for this model was identified in~\cite{BFS:2007t}. Theorem~\ref{growth_thm} applies to both of these model and proves slow decorrelation. This implies that the fluctuation process limits extend to general $1+0$ dimensional space-time directions. The characteristics lines are shown in Figure~\ref{slow_dec_flat_tasep}(b).
\begin{figure}
\begin{center}
\psfrag{a}[c]{(a)}
\psfrag{b}[c]{(b)}
\includegraphics[height=4.5cm]{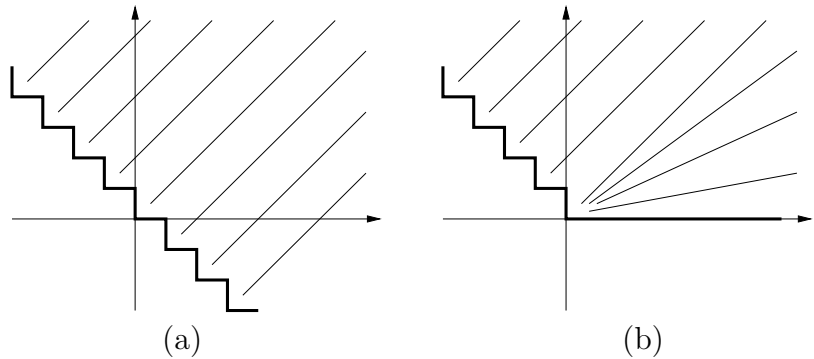}
\caption{Flat (a) and half-flat (b) TASEP correspond, via their height functions, with the LPP models in the
two regions shown above. Characteristic lines are perpendicular to the initial direction of the height function
and in case (b) the entropy condition implies that they fan out to the right of the origin.}
\label{slow_dec_flat_tasep}
\end{center}
\end{figure}

A variant of half flat initial conditions has particles starting at $2\Z_-$ plus a few particles at positive
even integers, with a different speed $\alpha$. This is known as {\it two speed} TASEP and~\cite{BPS:2009t}
gives a complete description and proof of the process limit for these initial conditions. As with all of the other examples, this can be coupled with a LPP model and hence Theorem~\ref{growth_thm} applies and prove slow decorrelation and enables us to extend these process limit results as well.

\subsubsection{Partially asymmetric simple exclusion process (PASEP)}
The PASEP is a generalization of TASEP where, particles jump to the right-neighboring site with
rate $p\neq 1/2$ and to the left-neighboring site with rate $q=1-p$ (always provided that the destination
sites are empty). An important tool to study PASEP is the {\it basic coupling}~\cite{TL:1999s,TL:2005i}.
Through a graphical construction, one can realize and hence couple together every PASEP (with different initial conditions) on the same probability space. Even though PASEP 
can not be mapped to a LPP model, it still has the same super-additivity properties necessary to prove a version of Theorem~\ref{growth_thm}. The property comes in the form of {\it attractiveness}. That PASEP is attractive means that if you start with two initial conditions corresponding to height functions $h_1(x,0)\leq h_2(x,0)$ for all $x\in \R$, then for any future time $t$, $h_1(x,t)\leq h_2(x,t)$ for all $x\in \R$.

We now briefly review this graphical construction. Above every integer draw a half-infinite time ladder.
Fix $p$ (and hence $q$) and for each ladder place right and left horizontal arrows independently at Poisson
points with rates $p$ and $q$ respectively. This is the common environment in which all initial conditions
may be coupled. Particles move upwards in time until they encounter an arrow leaving their ladder. They
try to follow this ladder, and hence hop one step, yet this move is excluded if there is already another particle on the neighboring ladder. That this graphical construction leads to attractiveness for the PASEP is shown, for instance, in ~\cite{TL:1999s,TL:2005i}.

In a series of three papers~\cite{TW:2008i,TW:2008f,TW:2008a} Tracy and Widom show that for step initial conditions with positive drift $\gamma=p-q>0$, PASEP behaves asymptotically the same as TASEP (when speeded-up by $1/\gamma$). Just as in TASEP the current or height function is of central interest. $I(x,t)$ is defined as the number of particles which jumped from $x$ to $x+1$ minus the ones from $x+1$ to $x$ during $[0,t]$ and
$h(x,t)$ is defined by (\ref{height_function_def}). This time, the height function does not monotonically grow, but does still have a drift.

The slow decorrelation theorem for PASEP with general initial conditions is stated below.
By a PASEP model we mean a measure on initial configurations, as well as a rate $p=1-q\in (1/2,1]$.
We write $h(x,t)$ for the height function for this specified model, and $h'(x,t)$ for the height function for
the PASEP with step initial conditions. Note that the generalizations of Remark~\ref{non_const_remark} apply in this case too.

\begin{theorem}[Slow decorrelation for PASEP]\label{ASEP_thm}
Consider a velocity $v\in \R$ and a second variable $u\in \R$.
If there exist constants (depending on $v$ and $u$ and the model): $\ell$ and $\ell'$ non-negative;
$\gamma, \gamma'\in (0,1)$; $\nu\in (0,\gamma/\gamma')$; and distributions $D$ and $D'$ such that
\begin{eqnarray}
\nonumber &&\frac{h(vt,t)-t\ell}{t^{\gamma}}\Longrightarrow D, \quad \textrm{as }t\textrm{ goes to infinity},\\
&&\frac{h(vt+ut^{\nu},t+t^{\nu})-t\ell-t^\nu \ell'}{t^{\gamma}}\Longrightarrow D, \quad \textrm{as }t\textrm{ goes to infinity},\\
\nonumber &&\frac{h'(ut,t)-t\ell'}{t^{\gamma'}}\Longrightarrow D', \quad \textrm{as }t\textrm{ goes to infinity},
\end{eqnarray}
then we have slow decorrelation of the PASEP height function at speed $v$, in the direction given by $u$ and with scaling exponent $\nu$, i.e., for all  $M>0$,
\begin{equation*}
\lim_{t\to \infty} \PP(|h(vt+ut^{\nu},t+t^{\nu})-h(vt,t)-t^{\nu}\ell'|\geq M t^{\gamma})=0.
\end{equation*}
\end{theorem}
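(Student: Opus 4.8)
The plan is to follow the proof of Theorem~\ref{growth_thm} essentially line by line, substituting for the super-additivity of last passage times the comparison bound that the excerpt flags as its replacement, namely attractiveness. Writing
\begin{equation*}
\chi_1(t)=\frac{h(vt,t)-t\ell}{t^{\gamma}},\quad
\chi_2(t)=\frac{h(vt+ut^{\nu},t+t^{\nu})-t\ell-t^{\nu}\ell'}{t^{\gamma}},\quad
\chi_3(s)=\frac{h'(us,s)-s\ell'}{s^{\gamma'}},
\end{equation*}
the three hypotheses read $\chi_1(t)\Rightarrow D$, $\chi_2(t)\Rightarrow D$ and $\chi_3(s)\Rightarrow D'$. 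The whole argument then reduces to producing a single inequality relating $h(vt+ut^{\nu},t+t^{\nu})$ to $h(vt,t)$ plus a step-process increment; once that is in hand, the distributional squeeze is a verbatim transcription of the last passage proof.

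First I would build the comparison process. Using the basic coupling, so that all initial conditions evolve on a common environment of Poisson arrows, I restart the dynamics at time $t$ via the Markov property: from time $t$ onward the given process $h$ and any auxiliary process run on the same arrows. I then introduce the height function $V$ of the PASEP launched at time $t$ from step initial data with corner at $\lfloor vt\rfloor$, with the additive (current) offset chosen so that $V(x,t)=h(vt,t)+|x-vt|$. Since every PASEP height function is $1$-Lipschitz in space, $h(x,t)\leq h(vt,t)+|x-vt|=V(x,t)$ for all $x$, so $V(\cdot,t)\geq h(\cdot,t)$ pointwise. The constant offset is inert under the dynamics, so it simply tracks through and causes no difficulty.

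Next I would invoke attractiveness. As $V(\cdot,t)\geq h(\cdot,t)$ and both processes are coupled on the same arrows after time $t$, attractiveness gives $V(\cdot,t+s)\geq h(\cdot,t+s)$ for every $s\geq 0$. Evaluating at $s=t^{\nu}$ and $x=vt+ut^{\nu}$ yields the sub-additive bound
\begin{equation*}
h(vt+ut^{\nu},t+t^{\nu})\leq V(vt+ut^{\nu},t+t^{\nu})=h(vt,t)+H',\qquad H':=V(vt+ut^{\nu},t+t^{\nu})-h(vt,t).
\end{equation*}
By spatial translation invariance and time-homogeneity of the PASEP, $H'$ has the same law as $h'(ut^{\nu},t^{\nu})$; the $O(1)$ discrepancies from rounding $vt$ and $ut^{\nu}$ to lattice sites are negligible on the $t^{\gamma}$ scale since $\gamma>0$. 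This plays the role of $L_{\pp}$ in Theorem~\ref{growth_thm}, with the inequality now reversed.

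Finally I would run the squeeze. The bound above reads $\chi_2(t)\leq \chi_1(t)+R_t$ with $R_t=(H'-t^{\nu}\ell')/t^{\gamma}$, and since $H'\stackrel{d}{=}h'(ut^{\nu},t^{\nu})$ we have $R_t\stackrel{d}{=}\chi_3(t^{\nu})\,t^{\nu\gamma'-\gamma}$. As $\nu<\gamma/\gamma'$ the exponent $\nu\gamma'-\gamma$ is negative, and because $\chi_3(t^{\nu})\Rightarrow D'$ is bounded in distribution, $R_t\to 0$ in probability; by Slutsky $\chi_1(t)+R_t\Rightarrow D$. Since $\chi_2(t)\Rightarrow D$ as well and $\chi_1(t)+R_t\geq \chi_2(t)$ pointwise in the coupling, Lemma~\ref{BAC_lemma} forces their difference to zero in probability, hence $\chi_2(t)-\chi_1(t)\to 0$. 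This is exactly the stated conclusion, that $(h(vt+ut^{\nu},t+t^{\nu})-h(vt,t)-t^{\nu}\ell')/t^{\gamma}\to 0$ in probability. I expect the one delicate point to be the comparison step: getting the direction right (the step valley lies \emph{above} $h$, so attractiveness produces an upper bound, not the lower bound one might naively expect by analogy with super-additivity), and verifying that the restarted process is genuinely a step-initial-condition PASEP after accounting for the lattice rounding and the free additive offset. The remaining distributional argument introduces no new difficulty.
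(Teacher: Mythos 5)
Your proposal is correct and is essentially the paper's own argument: the paper also restarts a coupled step-initial-condition PASEP at time $t$ centered at $vt$, uses attractiveness of the basic coupling to get the one-sided comparison, and then runs the same squeeze via Lemma~\ref{BAC_lemma}. The only cosmetic difference is that the paper phrases the comparison in terms of the current $I$ across the space-time line from $(vt,t)$ to $(vt+ut^{\nu},t+t^{\nu})$ rather than in terms of the height function and its $1$-Lipschitz property, but these are equivalent via $I(x,t)=\tfrac12(h(x,t)-x)$.
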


\begin{proof}[Proof of Theorem~\ref{ASEP_thm}]
Rather than the height function we focus on the current which is related via equation (\ref{height_current}).
$I(vt+ut^{\nu},t+t^{\nu})$ is equal to the current $I(vt,t)$ up to time $t$, plus the current of particles which cross the space-time line from $vt$ at time $t$ to $vt+ut^{\nu}$ at time $t+t^{\nu}$. We consider a coupled system starting at time $t$ reset so as to appear to be in step initial conditions centered at position $vt$. By attractiveness of the basic coupling, the current across the space-time line from $vt$ at time $t$ to $vt+ut^{\nu}$ at time $t+t^{\nu}$
for this ``step'' system will exceed that for the original system.
Denote by $I'(ut^{\nu},t^{\nu})$ the current associated to the coupled ``step'' system and observe that, it is distributed as the current of an independent step initial condition PASEP.
Thus,
\begin{equation*}
I(vt+ut^{\nu},t+t^{\nu})= I(vt,t)+I'(ut^{\nu},t^{\nu})+X_t
\end{equation*}
where $X_t\leq 0$. From this point on the proof follows exactly as in the proof of Theorem~\ref{growth_thm}.
\end{proof}

Using the fluctuation results proved in~\cite{TW:2008i,TW:2008f,TW:2008a}, reviewed in~\cite{TW:2009t},
we find that the above hypotheses are satisfied for PASEP with step initial conditions and also for step Bernoulli initial conditions~\cite{TW:2009o} with $\rho_-=0$ and $\rho_+>0$. The slow decorrelation directions are given by the characteristics just as in the case of TASEP. 
These two sets of initial conditions are the only ones for which fluctuations theorems are presently known for PASEP, but limit process theorems are not yet proven.

\subsection{The polynuclear growth (PNG) model}\label{PNG}
As mentioned before, slow decorrelation for the (continuous time, Poisson point) PNG model was proved previously in~\cite{PLF:2008s} in the case of the {\it PNG droplet} and {\it stationary PNG}. Theorem~\ref{growth_thm} (along with the necessary preexisting fluctuation theorems) gives an alternative proof of these results as well as the analogous result for {\it flat PNG}. Because of the minimality of the hypotheses of our theorem we may further prove slow decorrelation for the model of {\it PNG with two (constant) external sources} considered in~\cite{BR:2000l}. The way that PNG fits into the framework of our half-line to point LPP model is that one takes $\mu$ to be a Poisson point process of specified intensity on some domain. For the PNG droplet, stationary PNG and PNG with two external sources, we restrict the point process to $\R_+^2$ and (in the second and third cases) augment the measure $\mu$ with additional one dimensional point process along the boundaries. For flat PNG the support of the point process is
$\{(x,y):x+y\geq 0\}$. The limit process for the PNG droplet for fixed time was proved in~\cite{PS:2002s}
and for flat PNG was proved in~\cite{BFS:2008l} for space-like paths. It was explained in~\cite{PLF:2008s}
that slow decorrelation implies that these limit processes extend to general $1+0$ dimensional space-time
directions (with time scaling $t^{\nu}$ for $\nu<1$).

\subsection{First passage percolation}\label{FPP}
As opposed to LPP one can look to the minimum value of $T(\pi)$. This then goes by the name of directed {\it first} passage percolation and for simplicity we consider this only when we restrict our measure to being supported on a lattice. One may also consider undirected first passage percolation. Theorem~\ref{growth_thm} can be adapted in a straightforward way for both of these models. The statement of the theorem remains identical up to replacing the last passage time variable with the first passage time. For the proof the only change is that the compensator $X_t$ now satisfies $X_t\leq 0$ rather than $X_t\geq 0$.
Unfortunately no fluctuation theorems have been proved for first passage percolation, so all that we get is a criterion for slow decorrelation.

\subsection{Directed polymers}\label{DP}
We now briefly consider a lattice-based directed polymer models in $1+1$ dimension and note that just as in LPP, slow decorrelation can arise in these models. Unfortunately, just as in first passage percolation, there are no fluctuation theorems proved for such polymers. Recently, however, the order of fluctuations for a particular specialization of this model was proved in~\cite{TS:2009s}. It should be noted that while we focus on just one model, the methods used can be applied to other polymer models and in more than $1+1$ dimension (for example line to point polymers).

The model we consider is the point to point directed polymer. In this model we consider any directed, lattice
path $\pi$ from $(0,0)$ to a point $p$ and assign it a Gibbs weight $e^{\beta T(\pi)}$ where $\beta\geq 0$ is
known as the inverse temperature and where $T(\pi)$ is the sum of weights (which are independent) along the
path $\pi$ ($-T(\pi)$ is the energy of the path $\pi$). We define the partition function and free energy for a polymer from a point $p$ to $q$ as:
\begin{equation*}
 Z_{\beta}(p,q)=\sum_{\pi:p\to q} e^{\beta T(\pi)}, \qquad \fr_\beta(0,p)=\frac{1}{\beta}\log Z_\beta(0,p).
\end{equation*}
It is expected that the free energy satisfies similar fluctuation theorems to those of LPP (which is the $\beta=\infty$ limit of $\fr_\beta(0,p)$).

\begin{theorem}\label{polymer_thm}
Consider a directed polymer model and consider a point $p\in \R_+^{2}$ and a direction $u\in\R_+^{2} $.
If there exist constants (depending on $p$ and $u$ and the model weight distributions): $\ell$ and $\ell'$
non-negative; $\gamma,\gamma'\in (0,1)$; $\nu\in(0,\gamma/\gamma')$; and distributions $D$, $D'$ such that
\begin{equation*}
\begin{aligned}
\chi_1(t)&:=\frac{\fr_\beta(0,tp)-t\ell}{t^{\gamma}}\Longrightarrow D, \quad \textrm{as }t\textrm{ goes to infinity},\\
\chi_2(t)&:=\frac{\fr_\beta(0,tp+t^{\nu}u)-t\ell-t^\nu \ell'}{t^{\gamma}}\Longrightarrow D, \quad \textrm{as }t\textrm{ goes to infinity},\\
\chi_3(t)&:=\frac{\fr_\beta(tp,tp+t^{\nu}u)-t^\nu \ell'}{(t^{\nu})^{\gamma'}}\Longrightarrow D', \quad \textrm{as }t\textrm{ goes to infinity},
\end{aligned}
\end{equation*}
then we have slow decorrelation of the point to point polymer at $tp$, in the direction $u$ and with scaling
exponent $\nu$, which is to say that for all  $M>0$,
\begin{equation*}
\lim_{t\to \infty} \PP(|\fr_\beta(0,tp+t^{\nu}u)-\fr_\beta(0,tp)-t^\nu \ell'|\geq M t^{\gamma})=0.
\end{equation*}
\end{theorem}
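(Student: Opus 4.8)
The plan is to mirror the proof of Theorem~\ref{growth_thm} almost verbatim, replacing superadditivity of last passage times by superadditivity of the free energy. The key structural input I would establish first is the analogue of the superadditivity inequality, namely
\begin{equation*}
\fr_\beta(0,tp+t^{\nu}u)\geq \fr_\beta(0,tp)+\fr_\beta(tp,tp+t^{\nu}u),
\end{equation*}
valid when all three free energies are realized on the same probability space. This follows because restricting the sum defining the partition function $Z_\beta(0,tp+t^{\nu}u)$ to only those directed paths which pass through the intermediate point $tp$ can only decrease the sum; since such paths factor as a path from $0$ to $tp$ concatenated with a path from $tp$ to $tp+t^{\nu}u$, the restricted partition function equals $Z_\beta(0,tp)\,Z_\beta(tp,tp+t^{\nu}u)$, and taking $\tfrac{1}{\beta}\log$ of the inequality $Z_\beta(0,tp+t^{\nu}u)\geq Z_\beta(0,tp)Z_\beta(tp,tp+t^{\nu}u)$ gives the desired additive bound. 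I would remark that, unlike LPP, the free energy is a genuine sum rather than a maximum, so the inequality is in fact manifest from discarding nonnegative terms.

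Having established superadditivity, I would proceed exactly as before. Assume all free energies are coupled on one probability space, and write
\begin{equation*}
\fr_\beta(0,tp+t^{\nu}u)=\fr_\beta(0,tp)+\fr_\beta(tp,tp+t^{\nu}u)+X_t,
\end{equation*}
where the compensator satisfies $X_t\geq 0$. Dividing by $t^{\gamma}$ and rewriting in terms of $\chi_1(t),\chi_2(t),\chi_3(t)$ yields
\begin{equation*}
\chi_2(t)=\chi_1(t)+\chi_3(t)\,t^{\nu\gamma'-\gamma}+X_t\,t^{-\gamma}.
\end{equation*}
By the hypothesis $\nu<\gamma/\gamma'$ we have $\nu\gamma'-\gamma<0$, so $\chi_3(t)\,t^{\nu\gamma'-\gamma}$ converges to zero in probability and hence $\chi_1(t)+\chi_3(t)\,t^{\nu\gamma'-\gamma}\Rightarrow D$ by Slutsky's theorem. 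Since $\chi_2(t)\Rightarrow D$ as well, and since $X_t\,t^{-\gamma}\geq 0$ forces $\chi_2(t)\geq\chi_1(t)+\chi_3(t)\,t^{\nu\gamma'-\gamma}$, Lemma~\ref{BAC_lemma} applies to the pair $\big(\chi_2(t),\,\chi_1(t)+\chi_3(t)\,t^{\nu\gamma'-\gamma}\big)$ and shows their difference $X_t\,t^{-\gamma}$ tends to zero in probability. Consequently $\chi_2(t)-\chi_1(t)\to 0$ in probability, and since
\begin{equation*}
\chi_2(t)-\chi_1(t)=\frac{\fr_\beta(0,tp+t^{\nu}u)-\fr_\beta(0,tp)-t^\nu\ell'}{t^{\gamma}},
\end{equation*}
the claimed slow decorrelation follows.

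I do not expect a genuine obstacle here, precisely because the proof is an almost mechanical transcription of Theorem~\ref{growth_thm}; the only substantive point worth verifying carefully is the coupling step, i.e.\ that $\fr_\beta(0,tp)$ and $\fr_\beta(tp,tp+t^{\nu}u)$ can be realized simultaneously with $\fr_\beta(0,tp+t^{\nu}u)$ on a common probability space using the same weight configuration. For a lattice-based polymer with a single fixed i.i.d.\ weight field this is immediate, since all three partition functions are deterministic functionals of the one environment. The mild care needed is that the second point $tp+t^{\nu}u$ and the intermediate point $tp$ have integer (lattice) coordinates, which one arranges by the usual floor-rounding as in~\eqref{eq13}; this perturbs $\ell,\ell'$ by lower-order terms and does not affect the limiting distributions. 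Thus the minimal hypotheses of a free-energy LLN and CLT (the analogues of $\chi_1,\chi_2\Rightarrow D$ and $\chi_3\Rightarrow D'$) suffice, exactly as in the LPP case.
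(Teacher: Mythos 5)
Your proposal is correct and follows essentially the same route as the paper: the paper likewise splits the partition function into the contribution of paths through $tp$ (which factorizes as $Z_\beta(0,tp)\,Z_\beta(tp,tp+t^{\nu}u)$) plus the nonnegative contribution of the remaining paths, obtains the superadditive decomposition $\fr_\beta(0,tp+t^{\nu}u)=\fr_\beta(0,tp)+\fr_\beta(tp,tp+t^{\nu}u)+X_t$ with $X_t\geq 0$, and then invokes the argument of Theorem~\ref{growth_thm} verbatim. Your additional remarks on the coupling and lattice rounding are sensible but not part of the paper's (very brief) proof.
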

\begin{proof}
The direction $u$ for a given $p$ should correspond to the characteristic through that point. The proof of this criterion for slow decorrelation is identical to the proof for Theorem~\ref{growth_thm} and follows from the computation below (a result of super-additivity yet again):
\begin{equation*}
\begin{aligned}
\fr_\beta(0,tp+t^{\nu}u) &=
 \frac{1}{\beta}\log \bigg(\sum_{\pi:0\to tp} e^{\beta T(\pi)} \times \sum_{\pi:tp\to tp+t^{\nu}u} e^{\beta T(\pi)}
 + \sum_{\substack{\pi:0\to tp+t^{\nu}u,\\ tp\notin \pi}} e^{\beta T(\pi)}\bigg)\\
&= \frac{1}{\beta}\log \bigg(\sum_{\pi:0\to tp} e^{\beta T(\pi)} \times \sum_{\pi:tp\to tp+t^{\nu}u} e^{\beta T(\pi)}\bigg) + X_t\\
&= \fr_\beta(0,tp) + \fr_\beta(tp,tp+t^{\nu}u) + X_t.
\end{aligned}
\end{equation*}
Here  $X_t\geq 0$ and the argument is analogous to (\ref{growth_compensator}).
\end{proof}

\bibliographystyle{alpha}

\end{document}